\newtheoremstyle{thm}{\topsep}{\topsep}%
     {\slshape}
     {}
     {\bfseries}
     {:}
     { }
     {\thmname{#1\,}\thmnumber{#2\,}\thmnote{(#3)}}
\newtheoremstyle{def}{\topsep}{\topsep}%
	 {}
	 {}
	 {\bfseries}
	 {:}
	 { }
	 {\thmname{#1\,}\thmnumber{#2\,}\thmnote{(#3)}}
\theoremstyle{thm}
\newtheorem{theorem}{Theorem}[section]
\newtheorem{lemma}[theorem]{Lemma}
\newtheorem{corollary}[theorem]{Corollary}
\theoremstyle{def}
\newtheorem{remark}[theorem]{Remark}
\newtheorem{example}[theorem]{Example}
\newcommand{\tens}[1]{\boldsymbol{\mathsf{#1}}}
\newcommand{\vect}[1]{\boldsymbol{#1}}
\newcommand{\Ll}{\ensuremath{L_{\mathrm{\scriptstyle loc}}}}
\newcommand{\Hl}{\ensuremath{H_{\mathrm{\scriptstyle loc}}}}
\newcommand{\Wl}{\ensuremath{W_{\mathrm{\scriptstyle loc}}}}
\newcommand{\GT}{\Hl^1\cap\Ll^\infty}
\DeclareMathOperator{\Div}{div}
\DeclareMathOperator{\Grad}{grad}
\DeclareMathOperator{\Tr}{tr}
\begin{document}

\title{Wave equations and symmetric first-order systems in case of low regularity}
\author{Clemens Hanel\thanks{E-Mail: clemens.hanel@univie.ac.at} \\ Günther Hörmann\thanks{E-Mail: guenther.hoermann@univie.ac.at} \\ Christian Spreitzer\thanks{E-Mail: christian.spreitzer@univie.ac.at} \\ Roland Steinbauer\footnote{E-Mail: roland.steinbauer@univie.ac.at} \\ \small{University of Vienna, Faculty of Mathematics} \\[-0.8ex] \small{Nordbergstraße 15, 1090 Wien, Austria}}

\date{$2^\text{nd}$ of February, 2012}

\maketitle

\begin{abstract} 
\noindent We analyse an algorithm of transition between Cauchy problems for second-order wave equations and first-order symmetric hyperbolic systems in case the coefficients as well as the data are non-smooth, even allowing for regularity below the standard conditions guaranteeing well-posedness. The typical operations involved in rewriting equations into systems are then neither defined classically nor consistently extendible to the distribution theoretic setting. However, employing the nonlinear theory of generalized functions in the sense of Colombeau we arrive at clear statements about the transfer of questions concerning solvability and uniqueness from wave equations to symmetric hyperbolic systems and vice versa. Finally, we illustrate how this transfer method allows to draw new conclusions on unique solvability of the Cauchy problem for wave equations with non-smooth coefficients.\\[1\baselineskip]
\textbf{2010 Mathematics Subject Classification:} 46F30, 35L45, 35D30, 35Q75
\end{abstract}

	\section{Introduction}

	Theories for higher order partial differential equations on the one hand and first-order systems of (pseudo)differential equations on the other hand are to a large extent developed in parallel, although elaborate mechanisms for rewriting the former into the latter do exist in terms of modern analysis (cf.\ \cite{Kum81,Tay81}). However, in general the transition methods require high-powered pseudodifferential operator techniques and, what is even more restrictive in special situations,  do often require a certain smoothness of the coefficients (or symbols) to be mathematically meaningful in all their intermediate operations beyond mere formal manipulation. Nonlinear theories of generalized functions, in particular the differential algebras constructed in the sense of Colombeau, provide a means to embed distributions into a wider context where the transition between higher order equations and first-order systems can be based on well-defined operations. Thus, Colombeau theory allows to rigorously address the question about the precise relation between generalized solutions to wave equations and those of corresponding hyperbolic first-order systems in case of non-smooth coefficients. 

	The theory of generalized solutions to linear hyperbolic first-order systems has been developed over 20 years and has achieved a spectrum of results on existence and uniqueness of solutions to the Cauchy problem, distributional limits and regularity of solutions, and symmetrizability  (cf.\ \cite{Obe89,Obe92,LaOb91,Obe09,HoSp12,GaOb11b}). 

        On the other hand, generalized solutions of wave equations arising via the Laplace-Beltrami operator of a Lorentzian metric of low regularity have been studied in \cite{ViWi00,May06,GMS09,Han11,HKS11}. These investigations draw strong motivation from general relativity, in particular in the context of Chris Clarke's notion of generalized hyperbolicity \cite{Cla96,Cla98}, which generalizes the classical notion of global hyperbolicity (i.\,e. the geometric condition necessary for global well-posedness of the Cauchy problem for wave equations). More precisely, local and global existence and uniqueness of generalized solutions have been established for a wide class of ``weakly singular'' space-time metrics which are described using the geometric theory of nonlinear generalized functions (\cite[Chapter 3]{GKOS01}).
        
        In this paper we establish a rigorous way to translate Cauchy problems for wave equations into such for symmetric hyperbolic systems and vice versa in case of low regularity, thereby making results from either theory potentially available to the other. Also we show some of this potential by inspecting results on wave equations obtained from statements on first-order systems through careful analysis of the translation process.

	The plan of the paper is as follows. Section \ref{Notions} introduces and briefly reviews the required basic notions from Colombeau's theory of generalized functions and numbers. In Section \ref{Transformation} we present an explicit method to transform a second-order wave equation with generalized function coefficients into a symmetric hyperbolic system of first order and describe in precise terms the relation between generalized solutions in either case. The main results here are summarized in Theorem \ref{equi_thm} and in the simple Example \ref{Gegenbeispiel} we illustrate what kind of difficulties from the pure distribution theoretic viewpoint are remedied by our result. As an application we show in Section \ref{Existence} that solvability results on symmetric hyperbolic systems can be used to deduce in Theorem \ref{Th:Wave from system} new aspects on solvability of the Cauchy problem for wave equations with non-smooth coefficients.

	\section{Basic notions and spaces}\label{Notions}
	\paragraph{Notation}
	We assume several notational conventions to keep calculations clearly laid out: We denote vector valued functions by bold symbols, e.\,g. $\vect v$, and matrix valued functions by bold and sans serif letters, e.\,g. $\tens R$. We write $v_i$ for the components of a vector $\vect v$ and $R_{ij}$ for the components of a matrix $\tens R$. The $i^{\text{th}}$ row respectively $j^{\text{th}}$ column of a matrix $\tens R$ is denoted by $\vect R_{i\cdot}$ or $\vect R_{\cdot j}$ respectively. The spatial gradient of a scalar function $u$ shall be $\vect u'=\Grad u=\partial_x u$, the spatial Hessian shall be $\boldsymbol{\mathsf u}''$. We denote the Euclidean scalar product by $\langle\cdot,\cdot\rangle$.
	\medskip 

	\paragraph{Generalized functions}
	We will use variants of Colombeau algebras as presented in \cite{Col84,Obe92,GKOS01,Gar05}. Here, we recall their 
        essential features: 
		Let $E$ be a locally convex topological vector space with a topology given by a family of semi-norms $\{p_j\}$ with $j$ in some index set $J$. We define
		\begin{align*}
		 	\mathcal M_E:= & \{(u_\varepsilon)_\varepsilon\in E^{(0,1]}\mid\forall j\in J\:\: \exists N\in\mathbb N_0: p_j(u)=O(\varepsilon^{-N})\text{ as }\varepsilon\to 0\}, \\
			\mathcal N_E:= & \{(u_\varepsilon)_\varepsilon\in E^{(0,1]}\mid \forall j\in J\:\: \forall m\in\mathbb N_0: p_j(u)=O(\varepsilon^{m})\text{ as }\varepsilon\to 0\},
		\end{align*}
		the moderate respectively negligible subsets of $E^{(0,1]}$. Operations are induced from $E$ by $\varepsilon$-wise application, so we have the (vector space) inclusion relation \linebreak $\mathcal N_E\subseteq\mathcal M_E\subseteq E^{(0,1]}$. The generalized functions based on $E$ are defined as the quotient space $\mathcal G_E:=\mathcal M_E/\mathcal N_E$. If $E$ is a differential algebra, then $\mathcal N_E$ is an ideal in $\mathcal M_E$ and therefore $\mathcal G_E$ is a differential algebra as well, called the Colombeau algebra based on $E$.
	
		Let now $U$ be an open subset of $\mathbb R^n$. If we choose $E=\mathcal C^\infty(U)$ with the topology of uniform convergence of all derivatives on compact sets, then we obtain the special Colombeau algebra on $U$, i.\,e. $\mathcal G_{\mathcal C^\infty(U)}=\mathcal G(U)$.
		
		Moreover, we will also use the following three Sobolev spaces in this construction:
		\begin{itemize}
			\item $E=H^\infty(U)=\{u\in\mathcal C^\infty(\overline U)\mid\partial^\alpha u\in L^2(U)\,\forall\alpha\in\mathbb N^n_0\}$ with the family of norms
			\begin{equation*}
				\|u\|_{H^k}=\bigl(\sum_{|\alpha|\leq k}\|\partial^\alpha u\|_{L^2}\bigr)^{\frac 1 2}\quad k\in\mathbb N_0,
			\end{equation*}
			\item $E=W^{\infty,\infty}(U)=\{u\in\mathcal C^\infty(\overline U)\mid\partial^\alpha u\in L^\infty(U)\,\forall\alpha\in\mathbb N^n_0\}$ with the family of norms
			\begin{equation*}
				\|u\|_{W^{k,\infty}}=\max_{|\alpha|\leq k}\|\partial^\alpha u\|_{L^\infty}\quad k\in\mathbb N_0,
			\end{equation*}
			\item $E=\mathcal C^\infty(\overline I\times\mathbb R^n)$, where $I$ is an open, bounded interval, equipped with the family of semi-norms
			\begin{equation*}\
				\|u\|_{k,K}=\max_{|\alpha|\leq k}\|\partial^\alpha u\|_{L^\infty(\overline I\times K)},
			\end{equation*}
			where $K$ is a compact subset of $\mathbb R^n$ and $k\in\mathbb N_0$.
		\end{itemize}
		To simplify notation, we denote the corresponding Colombeau algebras as in \cite{Hor11}:
			\begin{align*}
				\mathcal G_{L^2}(U):= & G_{H^\infty(U)} & \mathcal G_{L^\infty}(U):= & G_{W^{\infty,\infty}(U)} & \mathcal G(\overline I\times\mathbb R^n):= & \mathcal G_{\mathcal C^\infty(\overline I\times\mathbb R^n)}.
			\end{align*}
	
	Elements in $\mathcal G_E({\mathbb R}^{n})$ are denoted by $u=[(u_\varepsilon)_\varepsilon]=(u_\varepsilon)_\varepsilon+\mathcal N_E({\mathbb R}^{n})$. We restrict a Colombeau function $u(t,x)$ to an initial surface by taking $u|_{t=0}=[(u_\varepsilon(0,x))_\varepsilon]$.
			The ring of generalized numbers $\widetilde{\mathbb R}$ consists of elements $u=[(u_\varepsilon)_\varepsilon]$, where $u_\varepsilon\in\mathbb R$. Note that $\widetilde{\mathbb R}^n$ is a module over $\widetilde{\mathbb R}$, a fact we have to keep in mind when doing linear algebra.

Generalized functions in $\mathcal G(\mathbb R^n)$ are characterized by their generalized point values. In fact, considering only classical point values is insufficient as the following statement from \cite{KuOb99} shows, cf. \cite[Thm. 1.2.46]{GKOS01}. Let $f\in\mathcal G({\mathbb R}^n)$. The following are equivalent:
			\begin{enumerate}[(i)]
				\item $f=0$ in $\mathcal G({\mathbb R}^n)$,
				\item $f(\tilde x)=0$ in $\widetilde{\mathbb R}$ for each $\tilde x\in\widetilde{\mathbb R}^n_c$.
			\end{enumerate}
		Here $\widetilde{\mathbb R}^n_c$ denotes the set of compactly supported generalized points: A generalized point $x\in\widetilde{\mathbb R}^n$ is compactly supported if there exists $K\subseteq\widetilde{\mathbb R}^n$ compact and $\eta>0$ such that $x_\varepsilon\in K$ for $\varepsilon<\eta$. 
		
			A matrix valued generalized function $\tens G\in M_k(\mathcal G({\mathbb R}^n))$ is called symmetric and nondegenerate if for any $\tilde x\in\widetilde{\mathbb R}_c^n$ the bilinear map $\tens G(\tilde x):\widetilde{\mathbb R}^k\times\widetilde{\mathbb R}^k\to\widetilde{\mathbb R}$ is symmetric and nondegenerate, \cite[Def. 5.1.2]{GKOS01}
		Here, by nondegenerate we mean that $\vect\xi\in\widetilde{\mathbb R}^n$, $\tens G(\tilde x)(\vect\xi,\vect\eta)=0\,\forall\vect\eta\in\widetilde{\mathbb R}^n$ implies $\vect\xi=0$.
		Apart from this pointwise definition, there exist equivalent characterizations of nondegeneracy, see \cite[Theorem 3.2.74]{GKOS01}. In particular, there always exists a representative entirely consisting of symmetric, nondegenerate matrices. If, in addition, there exists a representative of constant index $j$, we call $j=\nu(\tens G)$ the index of $\tens G$. We call matrices in $M_n(\widetilde{\mathbb R}^n)$ with $j=0$ Riemannian metrics and such with $j=1$ Lorentzian metrics. For concepts of linear algebra in $\widetilde{\mathbb R}^n$ we refer to \cite{May08}.
Finally, we point out the following lemma on Lorentzian metrics (the proof of which is straightforward).\medskip

	\begin{lemma}\label{lorentz_lemma}
		Let $\tens G\in M_{n+1}(\widetilde{\mathbb R}^{n+1})$ be of the form
		\begin{equation*}
			\tens G=\begin{pmatrix}
				-1 & \vect g^T \\
				\vect g & \tens R
			\end{pmatrix},
		\end{equation*}
		 with $\tens R$ a generalized Riemannian metric on $\widetilde{\mathbb R}^n$, 
                 then $\tens G$ is Lorentzian.
	\end{lemma}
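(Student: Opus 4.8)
The plan is to reduce the statement to a single well-chosen representative and then carry out the classical linear algebra $\varepsilon$-wise. Since $\tens R$ is a generalized Riemannian metric, by definition it possesses a representative $(\tens R_\varepsilon)_\varepsilon$ of constant index $0$, i.e.\ consisting of symmetric positive definite — in particular invertible — $n\times n$ matrices; I would fix such a representative together with an arbitrary representative $(\vect g_\varepsilon)_\varepsilon$ of $\vect g$. Then
\begin{equation*}
	\tens G_\varepsilon:=\begin{pmatrix} -1 & \vect g_\varepsilon^T \\ \vect g_\varepsilon & \tens R_\varepsilon \end{pmatrix}
\end{equation*}
is a representative of $\tens G$, and each $\tens G_\varepsilon$ is manifestly symmetric, so $\tens G$ is symmetric.

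Next I would determine the inertia of the real symmetric matrix $\tens G_\varepsilon$ by a block $LDL^{T}$ (Schur complement) factorisation, which is legitimate for every $\varepsilon$ precisely because $\tens R_\varepsilon$ is invertible. Setting
\begin{equation*}
	P_\varepsilon:=\begin{pmatrix} 1 & 0 \\ -\tens R_\varepsilon^{-1}\vect g_\varepsilon & \mathrm{Id}_n \end{pmatrix}
\end{equation*}
one obtains $P_\varepsilon^{T}\tens G_\varepsilon P_\varepsilon=\mathrm{diag}\bigl(s_\varepsilon,\tens R_\varepsilon\bigr)$ with scalar Schur complement $s_\varepsilon:=-1-\vect g_\varepsilon^{T}\tens R_\varepsilon^{-1}\vect g_\varepsilon$. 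As $\tens R_\varepsilon$ is positive definite, $\vect g_\varepsilon^{T}\tens R_\varepsilon^{-1}\vect g_\varepsilon\ge 0$, hence $s_\varepsilon\le -1<0$. By Sylvester's law of inertia the inertia of $\tens G_\varepsilon$ equals that of $\mathrm{diag}(s_\varepsilon,\tens R_\varepsilon)$, namely $n$ positive and exactly one negative eigenvalue, and this holds for every $\varepsilon$. Thus $(\tens G_\varepsilon)_\varepsilon$ is a representative of constant index $1$.

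It then remains to confirm that $\tens G$ is nondegenerate in the generalized sense, for which I would read off from the factorisation $\det\tens G_\varepsilon=s_\varepsilon\det\tens R_\varepsilon$ with $|s_\varepsilon|\ge 1$: since $\tens R$ is nondegenerate its determinant is invertible in $\widetilde{\mathbb R}$, so there is $m\in\mathbb N$ with $|\det\tens R_\varepsilon|\ge\varepsilon^{m}$ for $\varepsilon$ small, whence $|\det\tens G_\varepsilon|\ge\varepsilon^{m}$ for $\varepsilon$ small, i.e.\ $\det\tens G$ is invertible in $\widetilde{\mathbb R}$ and $\tens G$ is symmetric and nondegenerate by the characterisations in \cite[Theorem 3.2.74]{GKOS01}. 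Together with the constant-index-$1$ representative constructed above this yields $\nu(\tens G)=1$, i.e.\ $\tens G$ is Lorentzian. The only point that requires a little care — and the reason the whole argument is nonetheless routine — is the passage between the generalized matrix $\tens G$ and the $\varepsilon$-wise real computations: one must begin from a \emph{positive definite} representative of $\tens R$ so that each $\tens R_\varepsilon$ is genuinely invertible, and invoke the definition of the index via constant-index representatives; everything past that is classical linear algebra performed uniformly in $\varepsilon$.
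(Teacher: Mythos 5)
Your proof is correct. Note that the paper itself gives no argument for this lemma (it is dismissed as ``straightforward''), so there is nothing to compare against; your Schur-complement factorisation $P_\varepsilon^T\tens G_\varepsilon P_\varepsilon=\mathrm{diag}(s_\varepsilon,\tens R_\varepsilon)$ with $s_\varepsilon=-1-\vect g_\varepsilon^T\tens R_\varepsilon^{-1}\vect g_\varepsilon\le -1$, combined with Sylvester's law of inertia, is precisely the kind of routine verification the authors had in mind, and you handle the genuinely generalized points correctly: choosing a symmetric positive definite (hence $\varepsilon$-wise invertible) representative of $\tens R$, passing from nondegeneracy of $\tens R$ to invertibility of $\det\tens R$ in $\widetilde{\mathbb R}$ via the cited characterisation, and reading off $\det\tens G_\varepsilon=s_\varepsilon\det\tens R_\varepsilon$ (legitimate since $\det P_\varepsilon=1$) to get both nondegeneracy and the constant-index-$1$ representative, i.e.\ $\nu(\tens G)=1$.
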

	
\section{Transformation between equations and systems}\label{Transformation}

	In this section we relate solutions of wave equations to solutions of corresponding symmetric first-order systems.
	To this end, consider a wave equation in $\mathcal G({\mathbb R}^{n+1})$
		\begin{equation}\label{wave_eq}
		-\partial_t^2 u+2\sum_{i=1}^n g_i\partial_{x_i}\partial_t u+\sum_{i,j=1}^n R_{ij}\partial_{x_i}\partial_{x_j} u + a\partial_t u+\sum_{i=1}^n b_i\partial_{x_i} u +cu =f,
		\end{equation}
		with principal part derived from $\tens G:=\left(\begin{smallmatrix}-1 & \vect g^T \\ \vect g & \tens R\end{smallmatrix}\right)$, a generalized Lorentzian metric. In fact, our arguments still hold in the more general case, where the matrix entry $G_{00}$ is a strictly negative generalized function (i.\,e. $-G_{00}>\varepsilon^m$ on compact sets for some $m>0$), upon dividing by $-G_{00}$. Here $\tens R=(R_{ij})$ is a positive definite, symmetric matrix of generalized functions, $\vect g$ and $\vect b$ are vectors with entries in $\mathcal G({\mathbb R}^{n+1})$ and $a$, $c$, and $f$ are generalized functions.
		 
        Next we rewrite the wave equation \eqref{wave_eq} as a first-order system. There exist several algorithms to obtain a hyperbolic first-order system. However, we employ an algorithm that also guarantees the symmetry of the system. Indeed by setting $\boldsymbol{w}=(u,\partial_t u, \tens S \vect u')^T$ we arrive at the system 
	\begin{equation}\label{system}
		-\partial_t\vect w + \sum_{i=1}^n \boldsymbol{\mathsf A}_i\partial_{x_i}\vect w+\boldsymbol{\mathsf B}\vect w=\boldsymbol F
	\end{equation}
	in $\mathcal G({\mathbb R}^{n+1})^{n+2}$. Here  $\tens S=\tens R^{\frac 1 2}$ is 
	constructed via $\varepsilon$-wise diagonalization. The so constructed square root is a symmetric positive definite matrix with entries in $\mathcal G(\mathbb R^{n+1})$, as we will discuss below. The matrices $\tens A_i$, $\tens B$, and the vector $\vect F$ are given in the following way:
	
	\begin{align}\label{coefficents of system}
	\boldsymbol{\mathsf A}_i= & \left(\begin{array}{lll} 
	0 & 0 & 0_{1\times n} \\
	0 & 2g_i & \vect S_{i\cdot}\\
	0_{n\times 1} & \vect S_{\cdot i} & 0_{n\times n}
	\end{array}\right) & \boldsymbol F = & 
	\left(\begin{array}{c} 0 \\ f \\ 0_{n\times 1}
	\end{array}\right)\notag \\
	\boldsymbol{\mathsf B}= & \left(\begin{array}{llc} 
	0 & 1 & 0_{1\times n}\\
	c & a & (\Div \tens S)^T+(\boldsymbol b-\Div \boldsymbol{\mathsf S}^2)^T\tens S^{-1} \\
	0_{n\times 1} & 0_{n\times 1} & -(\partial_t \tens S)\tens S^{-1} 
	\end{array}\right).
	\end{align}

	Here we have used the fact that $\Tr (\tens S^2 {\boldsymbol{\mathsf u}}'')= \Div (\tens S^2 \vect u')-\langle\Div \tens S^2,\vect u'\rangle$. A word on the notation is in order. For any symmetric matrix $\tens S$, we set the divergence\linebreak
	$(\Div \tens S)_i=\sum_{j=1}^n \partial_{x_j}S_{ij}$, i.\,e. $\Div \tens S$ is a vector, whose $i^\text{th}$ entry is simply the divergence of the $i^\text{th}$ row (or column) of the matrix $\tens S$.
	
	Finally, we show that $\tens S=\tens R^{\frac 1 2}$ is a symmetric and positive definite matrix with generalized functions as entries. We can write a representative of this matrix $\tens{S}$ via diagonalization, so we have $\tens S_\varepsilon(t,x)=\tens U_\varepsilon(t,x)^T{\tens D_\varepsilon(t,x)}^{\frac 1 2}\tens U_\varepsilon(t,x)$ with $$\tens D_\varepsilon(t,x)^{\frac 1 2}=\mathrm{diag} \Bigl(\sqrt{\lambda_{1,\varepsilon}(t,x)},\dots,\sqrt{\lambda_{n,\varepsilon}(t,x)}\Bigr),$$ where $\lambda_{1,\varepsilon}(t,x),\dots,\lambda_{n,\varepsilon}$ are the eigenvalues of a symmetric and positive definite representative $(\tens{R}_\varepsilon)_\varepsilon$ of $\tens{R}$. Observe that $(\tens D_\varepsilon)_\varepsilon$ and $(\tens U_\varepsilon)_\varepsilon$ are not necessarily nets of matrices with smooth entries, however, the product $(\tens U_\varepsilon^T{\tens D}^{\frac 1 2}_\varepsilon\tens U_\varepsilon)_\varepsilon$ is smooth by the following lemma (where we denote by $S_n(\mathbb R)$ and $S^+_n(\mathbb R)$ the spaces of symmetric and positive definite symmetric matrices in $M_n(\mathbb R)$).
	\medskip

\begin{lemma}\label{Smooth square root}
	The smooth map $f:S^+_n(\mathbb R)\to S^+_n(\mathbb R)$ with $f(\tens A)=\tens A^2$ is a diffeomorphism.
\end{lemma}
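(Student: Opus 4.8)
The plan is to apply the inverse function theorem. First I would record the elementary facts: $S_n(\mathbb R)$ is a finite-dimensional vector space, $S^+_n(\mathbb R)$ is an open subset of it (positive definiteness being characterised, say, by strict positivity of all leading principal minors), hence a smooth manifold with tangent space $S_n(\mathbb R)$ at each point; the map $f(\tens A)=\tens A^2$ is polynomial in the entries, hence smooth; and it genuinely takes values in $S^+_n(\mathbb R)$, since $\tens A^2$ is symmetric and $\langle\tens A^2\vect x,\vect x\rangle=\langle\tens A\vect x,\tens A\vect x\rangle>0$ for $\vect x\neq0$ because $\tens A$ is invertible. It then suffices to show that $f$ is bijective and that $Df(\tens A)$ is invertible for every $\tens A$; a bijective local diffeomorphism is a diffeomorphism.

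For bijectivity I would use the spectral theorem. Surjectivity: given $\tens B\in S^+_n(\mathbb R)$, diagonalise $\tens B=\tens U^T\tens D\tens U$ with $\tens U$ orthogonal and $\tens D=\mathrm{diag}(d_1,\dots,d_n)$, $d_k>0$, and take $\tens A=\tens U^T\mathrm{diag}(\sqrt{d_1},\dots,\sqrt{d_n})\tens U$. Injectivity: if $\tens A_1^2=\tens A_2^2=\tens B$ with $\tens A_i\in S^+_n(\mathbb R)$, then each $\tens A_i$ commutes with $\tens B$ and hence preserves every eigenspace $V_k=\ker(\tens B-d_k\tens I)$; on $V_k$ the restriction of $\tens A_i$ is symmetric, positive definite and squares to $d_k\tens I$, so it must equal $\sqrt{d_k}\,\tens I$, whence $\tens A_1=\tens A_2$.

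For the differential, a one-line computation gives $Df(\tens A)\tens H=\tens A\tens H+\tens H\tens A$ for $\tens H\in S_n(\mathbb R)$, and this again lies in $S_n(\mathbb R)$. Since $S_n(\mathbb R)$ is finite-dimensional, invertibility of $Df(\tens A)$ reduces to injectivity, and I would check the latter by diagonalising $\tens A=\tens U^T\tens D\tens U$ with eigenvalues $\lambda_k>0$ and passing to $\tilde{\tens H}=\tens U\tens H\tens U^T$: the equation $\tens A\tens H+\tens H\tens A=0$ turns into $(\lambda_i+\lambda_j)\tilde H_{ij}=0$ for all $i,j$, and $\lambda_i+\lambda_j>0$ forces $\tilde{\tens H}=0$, hence $\tens H=0$. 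The inverse function theorem then makes $f$ a local diffeomorphism near every point, and combined with the bijectivity above this yields that $f$ is a diffeomorphism. The only genuinely non-formal step is this invertibility of the Sylvester-type operator $\tens H\mapsto\tens A\tens H+\tens H\tens A$, which is precisely where positive definiteness of $\tens A$ is used; everything else is bookkeeping.
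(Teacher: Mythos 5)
Your proof is correct and follows essentially the same route as the paper: establish bijectivity of the squaring map and then apply the inverse function theorem to the differential $\mathrm df(\tens A)(\tens H)=\tens A\tens H+\tens H\tens A$ on the open set $S^+_n(\mathbb R)\subseteq S_n(\mathbb R)$. The only deviations are minor: you prove bijectivity directly via the spectral theorem where the paper cites Lang, and you verify injectivity of the differential through the Sylvester-type computation $(\lambda_i+\lambda_j)\tilde H_{ij}=0$ rather than the paper's eigenvector-orthogonality contradiction, both arguments using positive definiteness of $\tens A$ in the same essential way.
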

\begin{proof}
	The map $f:S^+_n(\mathbb R)\to S^+_n(\mathbb R)$ is bijective (e.\,g.\ \cite[Prop.\ 6.8]{Lan02}) and we employ the inverse function theorem to conclude that $f$ is a global diffeomorphism. Indeed since $S^+_n(\mathbb R)$ is an open subset of $S_n(\mathbb R)$ we may identify the tangent space $\mathrm T_{\tens A}S^+_n(\mathbb R)$ for $\tens A\in S^+_n(\mathbb R)$ with $S_n(\mathbb R)$ and obtain $\mathrm df(\tens A)(\tens B)=\tens A\tens B+\tens B\tens A$. Now injectivity of $\mathrm df(\tens A)$ follows since if $\tens A\tens B=-\tens B\tens A$ and $\tens B\not=0$ there is $0\not=\lambda$ with $\tens{B}\vect v=\lambda \vect v$ for some $\vect v\not=0$. But then $\tens A\vect v$ is an eigenvector to $-\lambda$ and by symmetry of $\tens{B}$ we have $\langle\vect v,A\vect v\rangle=0$, contradicting positive definiteness of $\tens{A}$.
\end{proof}

	\paragraph{From the wave equation to the first-order system}
	
 Assuming that we have a solution $u$ to the wave equation \eqref{wave_eq}, in the following lemma we will construct a solution to the first-order system \eqref{system}. Basically, we define a vector $\vect w=(u,\partial_t u, \tens S \vect u')^T$ and rewrite the wave equation in terms of the three components of $\vect w$.\medskip

\begin{lemma}\label{wave to system}
Let $u_0,u_1\in \mathcal G(\mathbb R^n)$ and consider the second-order equation \eqref{wave_eq} with initial condition $(u,\partial_t u)|_{t=0}=(u_0,u_1)$. If $u\in\mathcal G(\mathbb R^{n+1})$ is a solution of \eqref{wave_eq}, then
the vector $\boldsymbol{w}=(u,\partial_t u, \tens S \vect u')^T$ is a solution to the first-order system \eqref{system} with initial condition $\vect w|_{t=0}=(u_0,u_1,\tens S \vect u_0')^T$. 
\end{lemma}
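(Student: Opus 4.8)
The plan is to substitute the ansatz $\vect w=(u,\partial_t u,\tens S\vect u')^T$ directly into \eqref{system} and to verify the resulting $n+2$ equations, organised into three blocks according to the block structure of the matrices $\tens A_i$, $\tens B$ in \eqref{coefficents of system}: the first scalar row, the second scalar row, and the final $n$-dimensional row. All manipulations are carried out $\varepsilon$-wise on the representative $(\tens S_\varepsilon)_\varepsilon=(\tens U_\varepsilon^T\tens D_\varepsilon^{1/2}\tens U_\varepsilon)_\varepsilon$ of $\tens S$ discussed above, which by Lemma \ref{Smooth square root} is a moderate net of smooth, symmetric, positive definite matrices, satisfies $\tens S_\varepsilon^2=\tens R_\varepsilon$ exactly, and for which $(\tens S_\varepsilon^{-1})_\varepsilon$ is again a moderate net of smooth matrices; in particular $\vect w\in\mathcal G(\mathbb R^{n+1})^{n+2}$ and the terms $\tens S^{-1}$, $\Div\tens S$, $\partial_t\tens S$ occurring in the coefficients are meaningful on this level.

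For the first component, the vanishing first row of every $\tens A_i$ and the first row $(0,1,0_{1\times n})$ of $\tens B$ reduce \eqref{system} to $-\partial_t w_0+w_1=0$, which is the identity $-\partial_t u+\partial_t u=0$ by the definitions $w_0=u$, $w_1=\partial_t u$, while $w_0|_{t=0}=u_0$ holds by hypothesis. The second component is where \eqref{wave_eq} enters. Writing $\vect V:=\tens S\vect u'$ for the tail of $\vect w$ and using $w_1=\partial_t u$, it reads
\begin{equation*}
-\partial_t^2 u+2\sum_{i=1}^n g_i\partial_{x_i}\partial_t u+\sum_{i=1}^n\vect S_{i\cdot}\partial_{x_i}\vect V+cu+a\partial_t u+\langle\Div\tens S,\vect V\rangle+\langle\vect b-\Div\tens S^2,\tens S^{-1}\vect V\rangle=f.
\end{equation*}
Since $\tens S^{-1}\vect V=\vect u'$, the last inner product equals $\langle\vect b,\vect u'\rangle-\langle\Div\tens S^2,\vect u'\rangle$. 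For the remaining second-order contribution one combines $\Div(\tens S\vect V)=\langle\Div\tens S,\vect V\rangle+\sum_{i=1}^n\vect S_{i\cdot}\partial_{x_i}\vect V$ with $\tens S\vect V=\tens S^2\vect u'=\tens R\vect u'$ and the identity $\Tr(\tens S^2\tens u'')=\Div(\tens S^2\vect u')-\langle\Div\tens S^2,\vect u'\rangle$ recorded above, getting $\sum_i\vect S_{i\cdot}\partial_{x_i}\vect V=\Tr(\tens S^2\tens u'')+\langle\Div\tens S^2,\vect u'\rangle-\langle\Div\tens S,\vect V\rangle$. Substituting, the terms $\langle\Div\tens S,\vect V\rangle$ and $\langle\Div\tens S^2,\vect u'\rangle$ cancel, and as $\tens S^2=\tens R$ so that $\Tr(\tens S^2\tens u'')=\sum_{i,j=1}^n R_{ij}\partial_{x_i}\partial_{x_j}u$, the second component becomes precisely the left-hand side of \eqref{wave_eq}; it therefore holds in $\mathcal G(\mathbb R^{n+1})$ because $u$ solves \eqref{wave_eq}, and $w_1|_{t=0}=u_1$ by hypothesis.

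For the final $n$ components, the bottom rows of $\sum_i\tens A_i\partial_{x_i}\vect w$ equal $\sum_{i=1}^n\vect S_{\cdot i}\partial_{x_i}w_1=\tens S\,\partial_t\vect u'$, using the symmetry of $\tens S$ and the commuting of mixed partial derivatives of the smooth representatives, while the bottom rows of $\tens B\vect w$ involve $(\partial_t\tens S)\tens S^{-1}\vect V=(\partial_t\tens S)\vect u'$; the Leibniz rule $\partial_t\vect V=(\partial_t\tens S)\vect u'+\tens S\,\partial_t\vect u'$ then shows that these last $n$ equations of \eqref{system} collapse to the identity $0=0$. Finally, restriction to $t=0$ yields $\vect w|_{t=0}=(u_0,u_1,\tens S\vect u_0')^T$; the third component here uses that $\tens S_\varepsilon(0,\cdot)\bigl(\Grad u_\varepsilon(0,\cdot)-\Grad u_{0,\varepsilon}\bigr)$ is negligible, being the product of the moderate net $(\tens S_\varepsilon(0,\cdot))_\varepsilon$ with the gradient of the negligible net $(u_\varepsilon(0,\cdot)-u_{0,\varepsilon})_\varepsilon$.

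No single step is hard; the points requiring care are, first, keeping every matrix operation — $\tens R^{1/2}$, $\tens S^{-1}$, the row-wise divergences, the time derivatives — inside the Colombeau framework, which is exactly what the spectral representative and Lemma \ref{Smooth square root} (together with a moderate lower bound on the eigenvalues of $\tens R$) provide, and second, the bookkeeping in the second component, where the $\Div\tens S$ and $\Div\tens S^2$ contributions must be seen to cancel by the trace identity so that \eqref{wave_eq} re-emerges verbatim. The only negligible net appearing in the entire verification is the defect of $u$ in \eqref{wave_eq} itself.
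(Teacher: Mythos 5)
Your proposal is correct and amounts to essentially the same computation as the paper's proof: both hinge on the trace identity $\Tr(\tens S^2\boldsymbol{\mathsf u}'')=\Div(\tens S^2\vect u')-\langle\Div\tens S^2,\vect u'\rangle$, the product rule $\Div(\tens S\vect v)=\sum_i\vect S_{i\cdot}\partial_{x_i}\vect v+(\Div\tens S)^T\vect v$, and the Leibniz rule in $t$ for the last $n$ rows, the only difference being that you substitute $\vect w$ into \eqref{system} and verify row by row rather than rewriting \eqref{wave_eq} in divergence form and changing variables. Note that your treatment of the last $n$ equations implicitly takes the bottom-right block of $\tens B$ to be $+(\partial_t\tens S)\tens S^{-1}$, which is the sign actually used in the paper's proofs (the minus sign displayed in \eqref{coefficents of system} appears to be a typo), so this is not a defect of your argument.
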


\begin{proof}

First we rewrite equation \eqref{wave_eq} in divergence form, i.\,e.
$$ -\partial_t^2 u+2\langle \boldsymbol{g},\partial_t \vect u'\rangle+\Div (\tens S^2 \vect u') + a\partial_t u+\langle \boldsymbol b-\Div \tens S^2,\vect u'\rangle +cu=f.
$$

Introducing new variables $z:=\partial_t u$ and $\vect v:= \tens S \vect u'$, we may write
$$ -\partial_t z+2\langle \boldsymbol{g},\vect z'\rangle+\Div (\tens S \vect v) + az+\langle \boldsymbol b-\Div \tens S^2,\tens S^{-1}\vect v\rangle +cu=f.
$$
Hence we have 
\begin{align*} -\partial_t\vect w + & \sum_{i=1}^n
\left(\begin{array}{lll} 
0 & 0 & 0_{1\times n} \\
0 & 2g_i & \vect S_{i\cdot}\\
0_{n\times 1} & \vect S_{\cdot i} & 0_{n\times n}
\end{array}\right)\partial_{x_i}\vect w\notag \\
+ & \left(\begin{array}{llc} 
0 & 1 & 0_{1\times n}\\
c & a & (\Div \tens S)^T+(\boldsymbol b-\Div \tens S^2)^T\tens S^{-1} \\
0_{n\times 1} & 0_{n\times 1} & (\partial_t \tens S)\tens S^{-1} 
\end{array}{}\right)\vect w=
\left(\begin{array}{c} 0 \\ f \\ 0_{n\times 1}
\end{array}\right),
\end{align*}
where we have used that $\Div (\tens S\vect v)=\sum_{i=1}^n \vect S_{i\cdot}\partial_{x_i}\vect v +(\Div \tens S)^T\vect v$. Note that the second equation in the above system is just the original wave equation written in new variables, 
whereas the other equations represent the transformation of variables.
Finally, evaluating $\vect w$ at time $t=0$ yields $\vect w|_{t=0}=(u_0,u_1,\tens S \vect u')^T$.
\end{proof}

\paragraph{From the first-order system to the wave equation}

We now look at the converse situation: Given a solution to the first-order system \eqref{system}, we would like to prove existence for wave-type equations. Now, let $\tens S$ be a symmetric and invertible $n$-dimensional matrix with entries in $\mathcal G(\mathbb R^{n+1})$, let $\vect g$ and $\vect b$ be vectors with entries in $\mathcal G({\mathbb R}^{n+1})$, and let $a,c$ be generalized functions. Observe that the matrix $\boldsymbol{\mathsf B}$ is not restricted by the structure of the term $(\Div \tens S)^T+(\boldsymbol b-\Div \tens S^2)^T\tens S^{-1}$. Let \linebreak $\tilde{\vect b}^T=(\Div \tens S)^T+(\boldsymbol b-\Div \tens S^2)^T\tens S^{-1}$. Multiplication with $\tens S$ from the right gives
\begin{equation*}
	\tilde{\vect b}^T\tens S=(\Div \tens S)^T\tens S+(\boldsymbol b-\Div \tens S^2)^T.
\end{equation*}
Bringing all terms except the one containing $\vect b$ to the other side results in
\begin{equation*}
	\tilde{\vect b}^T\tens S-(\Div \tens S)^T\tens S+(\Div \tens S^2)^T=\vect b^T.
\end{equation*}
Finally, transposition leads to an equation for $\vect b$ entirely in terms of $\tens S$ and an arbitrarily chosen coefficient $\tilde{\vect b}$:
\begin{equation*}
	\vect b=\tens S^T\tilde{\vect b}-\tens S^T\Div \tens S+(\Div \tens S^2).
\end{equation*}

\begin{lemma}\label{system to wave}

Let $u_0,u_1\in \mathcal G(\mathbb R^n)$. If $\vect w=(u,z,\vect v)^T \in \mathcal G(\mathbb R^{n+1})^{n+2}$ is a solution to the first-order system \eqref{system}
with initial condition $\vect w|_{t=0}=(u_0,u_1,\tens S \vect u_0')^T$,
then $u$ is a solution to \eqref{wave_eq} with initial condition $(u,\partial_t u)|_{t=0}=(u_0,u_1)$.

\end{lemma}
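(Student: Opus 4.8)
The plan is to reverse the computation performed in the proof of Lemma \ref{wave to system}, extracting from the three rows of the first-order system \eqref{system} first the information that $\vect w$ has the structure $(u,\partial_t u,\tens S\vect u')^T$, and then that $u$ satisfies the wave equation. Concretely, let $\vect w=(u,z,\vect v)^T$ solve \eqref{system} with the prescribed initial data. The first scalar row of the system reads $-\partial_t u + z = 0$, so that $z=\partial_t u$ in $\mathcal G(\mathbb R^{n+1})$. The bottom block of $n$ equations reads
\begin{equation*}
	-\partial_t\vect v + \sum_{i=1}^n \vect S_{\cdot i}\,\partial_{x_i} z - (\partial_t\tens S)\tens S^{-1}\vect v = 0,
\end{equation*}
and using $z=\partial_t u$ together with $\sum_i \vect S_{\cdot i}\partial_{x_i}(\partial_t u) = \tens S\,\partial_t\vect u' = \partial_t(\tens S\vect u') - (\partial_t\tens S)\vect u'$, this can be rewritten as
\begin{equation*}
	-\partial_t\bigl(\vect v - \tens S\vect u'\bigr) - (\partial_t\tens S)\tens S^{-1}\bigl(\vect v - \tens S\vect u'\bigr) = 0.
\end{equation*}

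So $\vect p := \vect v - \tens S\vect u'$ satisfies the linear homogeneous ODE (in $t$, with generalized-function coefficients) $\partial_t\vect p = -(\partial_t\tens S)\tens S^{-1}\vect p$ with initial value $\vect p|_{t=0} = \tens S\vect u_0' - \tens S\vect u'|_{t=0} = 0$. The key step is then to conclude $\vect p=0$ in $\mathcal G(\mathbb R^{n+1})$ from a Gronwall-type uniqueness argument for this ODE at the level of representatives: choosing a representative with $(\tens S_\varepsilon)_\varepsilon$ symmetric positive definite and moderate together with $(\tens S_\varepsilon^{-1})_\varepsilon$ moderate, and $(\partial_t\tens S_\varepsilon)_\varepsilon$ moderate, the coefficient net has at most polynomial-in-$1/\varepsilon$ bounds on compact sets, while the initial-value net is negligible; Gronwall's inequality then propagates negligibility to $(\vect p_\varepsilon)_\varepsilon$ on every compact subset, hence $\vect v = \tens S\vect u'$. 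Once we know $\vect w=(u,\partial_t u,\tens S\vect u')^T$, the middle row of \eqref{system} is, by construction of $\tens A_i$, $\tens B$ and $\vect F$ in \eqref{coefficents of system}, precisely the wave equation \eqref{wave_eq} rewritten in divergence form (reading the chain of identities in the proof of Lemma \ref{wave to system} backwards, and invoking $\Tr(\tens S^2\boldsymbol{\mathsf u}'') = \Div(\tens S^2\vect u') - \langle\Div\tens S^2,\vect u'\rangle$ to return to the non-divergence form). Finally, the initial conditions: $(u,\partial_t u)|_{t=0} = (u,z)|_{t=0} = (u_0,u_1)$ is immediate from $\vect w|_{t=0}=(u_0,u_1,\tens S\vect u_0')^T$ and $z=\partial_t u$.

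The main obstacle I anticipate is the Gronwall argument showing $\vect p=0$: one must be careful that it is carried out on fixed representatives, that the estimates are uniform on compact sets in a way compatible with the definition of $\mathcal N_E$ (all $\varepsilon^m$ orders, all semi-norms), and that the exponential factor $\exp(\int_0^t \|(\partial_t\tens S_\varepsilon)\tens S_\varepsilon^{-1}\|)$ appearing in Gronwall's estimate, which can blow up like $\exp(\varepsilon^{-N})$, does not destroy negligibility — it does not, because a negligible net times a merely moderate net (here even an exponentially-moderate one, but on a bounded time interval this is still $O(\varepsilon^{-N})$ after taking logs... one should instead note that the ODE for $\vect p_\varepsilon$ is linear so $\vect p_\varepsilon(t,\cdot) = \Phi_\varepsilon(t)\vect p_\varepsilon(0,\cdot)$ with $\Phi_\varepsilon$ the fundamental solution, and bound $\|\Phi_\varepsilon(t)\|$ crudely) — but the clean way is: $\tfrac{d}{dt}\|\vect p_\varepsilon(t,x)\|^2 \le C_\varepsilon(t,x)\|\vect p_\varepsilon(t,x)\|^2$ pointwise with $C_\varepsilon$ of polynomial order, giving $\|\vect p_\varepsilon(t,x)\|^2 \le \|\vect p_\varepsilon(0,x)\|^2 \exp(\int_0^t C_\varepsilon)$, and since $\|\vect p_\varepsilon(0,\cdot)\|$ is negligible on compacts while $\exp(\int_0^t C_\varepsilon)$ is of the form $\exp(O(\varepsilon^{-N}))$... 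This last point is genuinely the delicate one and deserves the most care; I would handle it by localising in time (the interval $I$ is bounded) and, if necessary, invoking the sharper observation that the relevant bound is really on $\partial_t\tens S_\varepsilon \cdot \tens S_\varepsilon^{-1}$ which for a Colombeau metric has a fixed polynomial order $N_0$, and then noting that a net which is $O(\varepsilon^m)$ for every $m$ multiplied by $\exp(t\,\varepsilon^{-N_0})$ is still... no — the honest resolution is that one does \emph{not} in general get an $\varepsilon$-uniform time interval, so one should instead argue component-wise using that \eqref{system} is already known (by the cited results on symmetric hyperbolic systems, or directly) to be a symmetric hyperbolic system whose solution is unique, and that $(u,\partial_t u,\tens S\vect u')^T$ from Lemma \ref{wave to system} is \emph{a} solution with the same data, so by uniqueness $\vect w$ equals it. I would present the proof via this uniqueness route, which sidesteps the Gronwall subtlety entirely: apply Lemma \ref{wave to system} is not available (it needs a wave solution), so instead directly cite uniqueness for symmetric hyperbolic Colombeau systems to identify the two solutions built from $\vect w$ — but cleanest of all, and what I would actually write, is the direct ODE argument above with an explicit remark that on the bounded interval $\overline I$ the fundamental matrix $\Phi_\varepsilon$ of the linear ODE $\partial_t\vect p_\varepsilon = -(\partial_t\tens S_\varepsilon)\tens S_\varepsilon^{-1}\vect p_\varepsilon$ satisfies a moderate bound $\|\Phi_\varepsilon(t)\| = O(\varepsilon^{-N})$ for a single $N$ (since $\|(\partial_t\tens S_\varepsilon)\tens S_\varepsilon^{-1}\|$ is $O(\varepsilon^{-N_1})$ and $|I|$ is fixed, $\|\Phi_\varepsilon\| \le \exp(|I|\,O(\varepsilon^{-N_1}))$ — which is \emph{not} $O(\varepsilon^{-N})$), so the honest statement is that this requires the "asymptotic gauge"/logarithmic-type estimates, OR a restriction to coefficients with $\log$-type bounds; given the paper's setting I expect the authors simply invoke such a bound or restrict $\tens S$ accordingly, and I would follow their lead by citing the appropriate ODE uniqueness lemma.

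In summary, the skeleton of the proof I would write is: (1) read off $z=\partial_t u$ from row one; (2) combine row one and the bottom block to get a homogeneous linear ODE with negligible initial data for $\vect p = \vect v - \tens S\vect u'$, and conclude $\vect p = 0$, hence $\vect w = (u,\partial_t u,\tens S\vect u')^T$; (3) substitute this structure into the middle row and recognise it, after undoing the divergence-form rewriting via $\Tr(\tens S^2\boldsymbol{\mathsf u}'') = \Div(\tens S^2\vect u') - \langle \Div\tens S^2,\vect u'\rangle$, as exactly \eqref{wave_eq}; (4) note $(u,\partial_t u)|_{t=0} = (u_0,u_1)$ from the given initial data for $\vect w$. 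Steps (1), (3), (4) are bookkeeping that mirror Lemma \ref{wave to system} in reverse; step (2), the vanishing of $\vect p$, is the only place where a genuine analytic estimate (Gronwall / ODE uniqueness in the Colombeau setting) enters, and is the step I expect to require the most care.
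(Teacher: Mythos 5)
Your steps (1), (3) and (4) coincide with the paper's proof, but step (2) --- the only analytic step --- contains a genuine gap, which you in fact diagnose yourself: the Gronwall estimate for $\vect p=\vect v-\tens S\vect u'$ produces a factor $\exp\bigl(\int_0^t\|(\partial_t\tens S_\varepsilon)\tens S_\varepsilon^{-1}\|\bigr)=\exp\bigl(O(\varepsilon^{-N})\bigr)$, and a negligible net multiplied by such a factor need not be negligible, so negligibility of the initial discrepancy does not propagate. None of your proposed repairs closes this gap under the hypotheses of the lemma: there is no general ODE-uniqueness result in $\mathcal G$ for merely moderate coefficients (this is precisely why log-type conditions appear later in Theorem \ref{Th:hyperbolic system}), invoking uniqueness for the symmetric hyperbolic system \eqref{system} would import asymptotic hypotheses that Lemma \ref{system to wave} does not assume, and restricting $\tens S$ to log-type would prove a strictly weaker statement than the one asserted.

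The paper avoids the issue by a purely algebraic rearrangement: instead of $\vect v-\tens S\vect u'$ it works with $\vect u'-\tens S^{-1}\vect v$. Using $\partial_t\vect v=\partial_t(\tens S\tens S^{-1}\vect v)=\tens S\,\partial_t(\tens S^{-1}\vect v)+(\partial_t\tens S)\tens S^{-1}\vect v$, the bottom block of the system, written as $-\partial_t\vect v+\tens S\vect z'+(\partial_t\tens S)\tens S^{-1}\vect v=0$, becomes $\tens S\vect z'=\tens S\,\partial_t(\tens S^{-1}\vect v)$; multiplying by $\tens S^{-1}$ and using $z=\partial_t u$ gives $\partial_t\bigl(\vect u'-\tens S^{-1}\vect v\bigr)=0$. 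The problematic coefficient has disappeared entirely: at the level of representatives, $\vect u_\varepsilon'(t,x)-\tens S_\varepsilon^{-1}\vect v_\varepsilon(t,x)$ equals its value at $t=0$ plus the integral over the compact interval $[0,t]$ of a negligible net, so negligibility propagates with no exponential loss, and $\vect v=\tens S\vect u'$ follows with no log-type or other asymptotic condition on $\tens S$. With this substitution in place of your Gronwall step, your steps (3) and (4) complete the proof exactly as in the paper.
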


\begin{proof} 

Note that the first equation of our system is just $z=\partial_t u$. The last $n$ equations read
\[
	-\partial_t \vect v+\tens S z' +(\partial_t \tens S)\tens S^{-1}\vect v=0,
\]
which is the same as
\[   
    \tens S \vect z'=\tens S\partial_t(\tens S^{-1}\vect v)
\]
since $\partial_t\vect v=\partial_t(\tens S\tens S^{-1}\vect v) = \tens S\partial_t(\tens S^{-1}\vect v) + (\partial_t\tens S)\tens S^{-1}\vect v$.
Multiplying by $\tens S^{-1}$ from the left and using that $z=\partial_t u$ we find
\[
	\partial_t(\vect u'-\tens S^{-1}\vect v)=0.
\]
By the initial condition $\vect u'|_{t=0}=\tens S^{-1}\vect v|_{t=0}$, we have $\vect u'=\tens S^{-1}\vect v$ for all $t$ which is equivalent to $\vect v=\tens S \vect u'$.
Hence, replacing $z$ by $\partial_t u$ as well as $\vect v$ by $\tens S  \vect u'$, the second equation of the system reads
\begin{eqnarray}
\nonumber
-\partial_t^2 u+2\sum_{i=1}^n g_i\partial_{x_i}\partial_t u +a\partial_t u+\sum_{i=1}^n b_i\partial_{x_i}u+cu \\ 
\nonumber+\sum_{i,j,k=1}^n\big( S_{ij}\partial_{x_i}(S_{jk}\partial_{x_k}u)+ (\partial_{x_i}S_{ij})S_{jk}\partial_{x_k}u- \partial_{x_i}(S_{ij}S_{jk})\partial_{x_k}u\big)
=f,
\end{eqnarray}
which is the same as
$$-\partial_t^2 u+2\sum_{i=1}^n g_i\partial_{x_i}\partial_t u+\sum_{i,j,k=1}^n S_{ik}S_{kj}\partial_{x_i}\partial_{x_j} u + a\partial_t u+\sum_{i=1}^n b_i\partial_{x_i} u +cu =f.
$$
Moreover, the condition $(u,\partial_t u)|_{t=0}=(u_0,u_1)$ is a direct consequence of the initial condition for the system. 
\end{proof}

\paragraph{Equivalence}

The content of Lemmas \ref{wave to system} and \ref{system to wave} can be summarized as follows. The problem of finding a solution to the Cauchy problem for the wave equation \eqref{wave_eq} is equivalent to the problem of finding a solution to the corresponding Cauchy problem for a first-order system \eqref{system}. Uniqueness of solutions is preserved during the rewriting process as well, more precisely we have the following statement. \medskip

\begin{theorem}\label{equi_thm}
	Given a wave equation \eqref{wave_eq} and the corresponding first-order system \eqref{system}. Let $u_0,u_1\in \mathcal G(\mathbb R^n)$. Then for functions $u\in\mathcal G(\mathbb R^{n+1})$ and $\vect w\in \mathcal G(\mathbb R^{n+1})^{n+2}$ such that $\vect w=(u,\partial_t u,\tens S\vect u')^T$ the following are equivalent:
	\begin{enumerate}[(i)]
		\item The function $u$ is the unique solution to the wave equation \eqref{wave_eq} with initial condition $(u,\partial_t u)|_{t=0}=(u_0,u_1)$.
		\item The function $\vect w$ is the unique solution to the first-order system \eqref{system} with initial condition $\vect w|_{t=0}=(u_0,u_1,\tens S \vect u_0')^T$.
	\end{enumerate}
\end{theorem}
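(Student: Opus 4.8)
The plan is to read off the equivalence directly from Lemmas~\ref{wave to system} and \ref{system to wave}, which already provide the two constructions; the only genuinely new work is to track what uniqueness on one side forces on the other. I would treat the two implications separately and keep in mind throughout that, in the statement, $\vect w$ is \emph{assumed} to be of the form $(u,\partial_t u,\tens S\vect u')^T$.

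For (i) $\Rightarrow$ (ii): assuming $u$ is the unique solution of \eqref{wave_eq} with data $(u_0,u_1)$, Lemma~\ref{wave to system} immediately yields that $\vect w=(u,\partial_t u,\tens S\vect u')^T$ solves \eqref{system} with data $(u_0,u_1,\tens S\vect u_0')^T$, so existence is clear. For uniqueness I would take an arbitrary solution $\tilde{\vect w}=(\tilde u,\tilde z,\tilde{\vect v})^T$ of \eqref{system} with the same initial data and apply Lemma~\ref{system to wave}: its first component $\tilde u$ solves \eqref{wave_eq} with data $(u_0,u_1)$, hence $\tilde u=u$ by hypothesis~(i). It then remains to observe — and this is precisely the computation already carried out in the proof of Lemma~\ref{system to wave} — that the other components of any solution of \eqref{system} with these data are completely determined by the first component: the first row of the system forces $\tilde z=\partial_t\tilde u$, and the last $n$ rows together with the initial condition $\tilde{\vect v}|_{t=0}=\tens S\vect u_0'$ force $\tilde{\vect v}=\tens S\tilde{\vect u}'$. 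Hence $\tilde{\vect w}=(u,\partial_t u,\tens S\vect u')^T=\vect w$. For (ii) $\Rightarrow$ (i): assuming $\vect w=(u,\partial_t u,\tens S\vect u')^T$ is the unique solution of \eqref{system} with data $(u_0,u_1,\tens S\vect u_0')^T$, Lemma~\ref{system to wave} gives at once that $u$ solves \eqref{wave_eq} with data $(u_0,u_1)$; and for uniqueness, any solution $\tilde u\in\mathcal G(\mathbb R^{n+1})$ of \eqref{wave_eq} with data $(u_0,u_1)$ produces, via Lemma~\ref{wave to system}, a solution $\tilde{\vect w}=(\tilde u,\partial_t\tilde u,\tens S\tilde{\vect u}')^T$ of \eqref{system} with data $(u_0,u_1,\tens S\vect u_0')^T$, so $\tilde{\vect w}=\vect w$ by~(ii), and reading off the first component gives $\tilde u=u$.

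The argument is essentially bookkeeping, and I do not expect a serious obstacle; the single point needing care is the assertion that every solution of the first-order system with the prescribed data is automatically of the special form $(u,\partial_t u,\tens S\vect u')^T$. This is what turns the two maps between solution sets into mutually inverse bijections rather than merely a construction in each direction, and it relies on the first equation of \eqref{system} and on the role of the initial value $\vect v|_{t=0}=\tens S\vect u_0'$ in the integration-in-$t$ step within the proof of Lemma~\ref{system to wave}. Since both lemmas are already established in the Colombeau setting, no additional generalized-function machinery is needed here.
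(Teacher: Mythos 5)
Your proposal is correct and follows essentially the same route as the paper: existence in both directions is read off from Lemmas~\ref{wave to system} and \ref{system to wave}, uniqueness of the wave equation is transferred via the injectivity of $u\mapsto(u,\partial_t u,\tens S\vect u')^T$, and uniqueness of the system is transferred by noting, exactly as in the paper, that the first equation and the last $n$ equations together with the initial condition $\vect v|_{t=0}=\tens S\vect u_0'$ determine $z$ and $\vect v$ from the first component. The point you flag as the one needing care (every solution of \eqref{system} with these data has the special form) is precisely the step the paper settles by referring back to the proof of Lemma~\ref{system to wave}, so no gap remains.
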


\begin{proof}
	The translation of solutions between wave equations and first-order systems is an immediate consequence of Lemmas \ref{wave to system} and \ref{system to wave}. To show uniqueness take the following considerations into account:

	By Lemma \ref{wave to system}, two distinct solutions to the initial value problem \eqref{wave_eq} would give rise to two distinct solutions to \eqref{system}, since $u\mapsto\vect w=(u,\partial_t u,\tens S\vect u')^T$ is injective, thus contradicting unique solvability of the initial value problem for \eqref{system}.
	
	Suppose there were two distinct solutions $\vect w$ and $\widetilde{\vect w}$ to the initial value problem of \eqref{system} with $\vect w|_{t=0}=\widetilde{\vect w}|_{t=0}=(u_0,u_1,\tens S \vect u_0')^T$. Then the first component of $\vect w$ and $\widetilde{\vect w}$ would give two distinct solutions to \eqref{wave_eq}. But since the solution of \eqref{wave_eq} is unique, the first component of $\widetilde{\vect w}$ must be equal to the first component of $\vect w$. From the proof of Lemma \ref{system to wave} it is then clear that $\tilde z=z$ and $\tilde {\vect v}=\vect v$, hence $\widetilde {\vect w}=\vect w$.  
\end{proof}

Theorem \ref{equi_thm} guarantees that in the context of the differential algebra $\mathcal{G}$ the Cauchy problem for the second-order wave equation \eqref{wave_eq} is equivalent in fairly general circumstances to that for the corresponding first-order system (\ref{system}-\ref{coefficents of system}) provided only the natural, and merely algebraic, consistency of initial data holds. This is not true in spaces of distributions. When the  coefficients are of low regularity, the transformation process may fail at various places, e.\,g. we might end up with differential equations that do not make sense in any distribution space.\medskip
 
\begin{example}\label{Gegenbeispiel}
	Consider a wave equation used in linear acoustics: Let $p:{\mathbb R}^2\to{\mathbb R}$ and let $c,\rho:{\mathbb R}\to{[a,b]}$ with $a>0$. The acoustic wave equation reads
	\begin{equation}\label{acoustic equation}
		\partial_t^2p-c^2\rho\partial_x\left(\frac 1 {\rho}\partial_xp\right)=0.
	\end{equation}
	We define $\vect w=(p,\partial_t p,c\partial_x p)$ and formally obtain the symmetric hyperbolic system
	\begin{subequations}\label{acoustic system}
	\begin{align}
		-\partial_t w_1+w_2 = & 0, \label{acoustic system 1} \\
		-\partial_t w_2+c\partial_x w_3-(c'+c\cdot (\ln\rho)')w_3= & 0, \label{acoustic system 2} \\
		-\partial_t w_3+cw_2 = & 0. \label{acoustic system 3}
	\end{align}
\end{subequations}
	Equation \eqref{acoustic equation} can be regarded as an equality in $L^2({\mathbb R}^2)$, if we have $p\in H^2({\mathbb R}^2)$, \linebreak $\rho\in\mathrm{Lip}({\mathbb R}^2)$ and $c\in L^\infty({\mathbb R}^2)$. We then have $\partial_t p\in H^1({\mathbb R}^2)$ and $c\partial_x p \in L^2(\mathbb R^2)$, since $c\in L^\infty({\mathbb R}^2)$ and $\partial_x p\in H^1({\mathbb R}^2)$. Thus $\vect w\in H^2({\mathbb R}^2)\times H^1({\mathbb R}^2)\times L^2({\mathbb R}^2)$, but not better in general. Hence equation \eqref{acoustic system 2} will in general not be defined on the level of distributions. For example, if $c(x)=1+H(x)$, then $c'w_3$ would be a product of $\delta$ with an $L^2$-function.
\end{example}

\section{Existence and uniqueness for the Cauchy problems}\label{Existence}

Rewriting a wave equation  \eqref{wave_eq} as a first-order system \eqref{system} via Theorem \ref{equi_thm} allows to apply existence and uniqueness theorems for the latter to prove existence and uniqueness of a solution to the initial value problem for the wave equation. More precisely, let a vector $\vect w$ of generalized functions with representative $(\vect w_{\varepsilon})_\varepsilon=((u_{\varepsilon},z_{\varepsilon},\vect v_{\varepsilon})^T)_\varepsilon$ be given that is the unique solution  of the Cauchy problem for \eqref{system} with initial data $\boldsymbol{w}|_{t=0}=(u_0,u_1, \tens S \vect u_0')^T$, then Theorem \ref{equi_thm} implies that $(u_{\varepsilon})_\varepsilon$ will be the unique generalized solution to the Cauchy problem for \eqref{wave_eq} with initial data $(u,\partial_t u)|_{t=0}=(u_0,u_1)$. In the following theorem we will give conditions on the coefficients of a wave equation \eqref{wave_eq} that guarantee the existence of a unique generalized solution to the corresponding first-order problem and, hence, existence and uniqueness of a generalized solution to the wave equation \eqref{wave_eq}.

To this end, we are going to invoke the existence theory for symmetric hyperbolic systems developed in \cite{Obe88,Obe89,CoOb90,LaOb91,Hor04a} and, in particular, the existence results of \cite{HoSp12}, which we will now briefly summarize. We start by recalling the essential asymptotic conditions. Let $U_T:=(0,T)\times \mathbb R^n$. For a function $g$ on $U_T$, we introduce its mixed $L^1$-$L^\infty$-norm by $\|g\|_{L^{1,\infty}(U_T)}:=\int_0^T \|g (s,\cdot)\|_{L^{\infty}(\mathbb R^n)}\,\rm d s.$
A generalized function $f\in\mathcal G(U)$ is said to be of \emph{local $L^\infty$-log-type} (\cite[Definition 1.1]{Obe88}) if it admits a representative $(f_\varepsilon)_\varepsilon$ such that for all $K$ compact in $U$, we have $\|f_\varepsilon\|_{L^\infty(K)}=O(\log\frac 1 \varepsilon)$ as $\varepsilon\to 0$; $f\in\mathcal G(U)$ is said to be of \emph{$L^\infty$-log-type} (\cite[Definition 1.5.1]{GKOS01}), if it admits a representative $(f_\varepsilon)_\varepsilon$ such that $\|f_\varepsilon\|_{L^\infty(U)}=O(\log\frac 1 \varepsilon)$ as $\varepsilon\to 0$; $f\in\mathcal G_{L^\infty}(U_T))$ is said to be of \emph{$L^{1,\infty}$-log-type} (cf. \cite[Definition 2.1]{CoOb90}) if it admits a representative $(f_\varepsilon)_\varepsilon$ such that $\|f_\varepsilon\|_{L^{1,\infty}}=O(\log\frac 1 \varepsilon)$ as $\varepsilon\to 0$ .
 
Solution candidates to the Cauchy problem for symmetric hyperbolic systems with \linebreak Colombeau generalized coefficients \eqref{system} are obtained as a net of solutions to the family of  classical equations $-\partial_t\vect w_\varepsilon + \sum_{i=1}^n \boldsymbol{\mathsf A}_{i,\varepsilon}\partial_{x_i}\vect w_\varepsilon+\boldsymbol{\mathsf B_\varepsilon}\vect w_\varepsilon=\boldsymbol F_\varepsilon$. By imposing additional asymptotic growth conditions in $\varepsilon$ on the coefficient matrices, a Gronwall-type argument can be used to prove the moderateness of the family of smooth solutions, hence existence of generalized solutions. Uniqueness of generalized solutions amounts to stability of the family of smooth solutions under negligible perturbations of the data. For convenience of the reader, we combine results from \cite{HoSp12}, adjusted to the situation at hand, in the following theorem (cf. \cite[Theorems 3.1, 3.2, and 3.4]{HoSp12}).\medskip

\begin{theorem}\label{Th:hyperbolic system}
	Let $\tens A_i$, $\tens B\in M_{n+2}(\mathcal G_{L^\infty}(U_T))$, where $A_i$ is symmetric. Then we have the following three results.
	\begin{enumerate}[A)]
		\item The Cauchy problem for the system \eqref{system} with initial data $\vect w_0\in(\mathcal G({\mathbb R}^n))^{n+2}$ and right-hand side $\vect F\in(\mathcal G(\overline U_T))^{n+2}$ has a unique solution $\vect w\in(\mathcal G(U_T))^{n+2}$ if
		\begin{enumerate}[(i)]
			\item the spatial derivatives $\tens A'_i$ as well as $\frac 1 2(\tens B+\tens B^T)$, the symmetric part \linebreak of the matrix $\tens B$, are of local $L^\infty$-log-type,
			\item there exists some constant $R_{\tens A}>0$ such that we have $\sup_{(t,x)}|\tens A_{i,\varepsilon}(t,x)|=O(1)$ on $(0,T)\times\{x\in{\mathbb R}^n:|x|>R_{\tens A}\}$ as $\varepsilon\to 0$.
		\end{enumerate}
		\vspace{0.15cm}
		\item The Cauchy problem for the system \eqref{system} with initial data $\vect w_0\in(\mathcal G_{L^2}({\mathbb R}^n))^{n+2}$ and right-hand side $\vect F\in(\mathcal G_{L^2}(U_T))^{n+2}$ has a unique solution $\vect w\in(\mathcal G_{L^2}(U_T))^{n+2}$ if the spatial derivatives of $\tens A'_i$ as well as the symmetric part of the matrix $\tens B$ are of $L^{1,\infty}$-log-type.
		\vspace{0.15cm}
		\item Let initial data $\vect w_0\in(\mathcal G_{L^\infty}({\mathbb R}^n))^{n+2}$ and  right-hand side $\vect F\in(\mathcal G_{L^\infty}(U_T))^{n+2}$ be given. If the spatial derivatives of $\tens A'_i$ as well as the symmetric part of the matrix $\tens B$ are of $L^\infty$-log-type, then  there exists a unique solution $\vect w\in(\mathcal G(U_T))^{n+2}$ of \eqref{system} such that $\vect w|_{t=0}-\vect w_0\in(\mathcal N({\mathbb R}^n))^{n+2}$.  
		\end{enumerate}

\end{theorem}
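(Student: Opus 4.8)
Since the three statements are, up to notational adjustments, precisely \cite[Theorems 3.1, 3.2, and 3.4]{HoSp12}, the plan is to check that the hypotheses imposed here on the symmetric matrices $\tens A_i$, on $\tens B$, and on the data coincide with those required there, and then to quote the corresponding existence and uniqueness results; the only genuine bookkeeping concerns specialising the generic symmetric hyperbolic system of \cite{HoSp12} to the particular form \eqref{system}--\eqref{coefficents of system} and keeping track of whether the relevant norms are taken in $\mathcal G$, $\mathcal G_{L^2}$, or $\mathcal G_{L^\infty}$. For completeness I would sketch the self-contained argument underlying all three parts. One fixes representatives $(\tens A_{i,\varepsilon})_\varepsilon$, $(\tens B_\varepsilon)_\varepsilon$, $(\vect F_\varepsilon)_\varepsilon$, $(\vect w_{0,\varepsilon})_\varepsilon$ of the data, each $\tens A_{i,\varepsilon}$ symmetric, and solves for every $\varepsilon$ the classical Cauchy problem $-\partial_t\vect w_\varepsilon+\sum_i\tens A_{i,\varepsilon}\partial_{x_i}\vect w_\varepsilon+\tens B_\varepsilon\vect w_\varepsilon=\vect F_\varepsilon$, $\vect w_\varepsilon|_{t=0}=\vect w_{0,\varepsilon}$, which by classical symmetric hyperbolic theory has a unique smooth solution on $\overline U_T$. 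Differentiating $\tfrac12\,\tfrac{\mathrm d}{\mathrm d t}\|\vect w_\varepsilon(t,\cdot)\|_{L^2}^2$ and integrating by parts in the term $\langle\tens A_{i,\varepsilon}\partial_{x_i}\vect w_\varepsilon,\vect w_\varepsilon\rangle$ --- here the symmetry of $\tens A_{i,\varepsilon}$ is essential, as it produces $-\tfrac12\langle(\partial_{x_i}\tens A_{i,\varepsilon})\vect w_\varepsilon,\vect w_\varepsilon\rangle$ --- one obtains, via Gronwall,
\[
\|\vect w_\varepsilon(t,\cdot)\|_{L^2}^2\le C\Bigl(\|\vect w_{0,\varepsilon}\|_{L^2}^2+\int_0^t\|\vect F_\varepsilon(s,\cdot)\|_{L^2}^2\,\mathrm ds\Bigr)\exp\Bigl(\int_0^t\kappa_\varepsilon(s)\,\mathrm ds\Bigr),
\]
where $\kappa_\varepsilon$ is controlled by the relevant norms of $\tens A'_{i,\varepsilon}$ and of the symmetric part $\tfrac12(\tens B_\varepsilon+\tens B_\varepsilon^T)$ only.

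The log-type hypotheses are designed exactly so that $\int_0^t\kappa_\varepsilon\le O(\log\tfrac1\varepsilon)$, making the exponential factor $O(\varepsilon^{-N})$ and hence the estimate moderate: in part B) the time-integrated, $L^{1,\infty}$, version is the right one, in part C) the global $L^\infty$ version, and in part A) only the local $L^\infty$ version is available, so the estimate controls $\vect w_\varepsilon$ merely on compact sets and must be supplemented by finite speed of propagation. This is where hypothesis (ii) enters: it keeps the propagation speed, bounded by $\sup|\tens A_{i,\varepsilon}|$, uniformly $O(1)$ outside a fixed ball, so that the backward characteristic cone over a given compact set does not expand as $\varepsilon\to0$. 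Running the same procedure for the systems satisfied by $\partial_x^\alpha\vect w_\varepsilon$ (and the $\partial_t$-derivatives obtained from the equation itself), whose inhomogeneities are moderate commutators built from higher $x$-derivatives of $\tens A_{i,\varepsilon}$, $\tens B_\varepsilon$ and lower-order derivatives of $\vect w_\varepsilon$, and inducting on $|\alpha|$, one gets moderateness of all the relevant Sobolev- respectively sup-norms, so that $(\vect w_\varepsilon)_\varepsilon$ represents an element of the appropriate Colombeau space.

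Uniqueness is obtained by applying the same energy inequality to differences: two solution nets of the system whose data differ by negligible nets have a difference solving the same system with negligible data, hence bounded by $(\text{negligible})\cdot O(\varepsilon^{-N})$, which is still negligible --- and likewise for every derivative. In part C) this only forces the initial values to agree modulo $\mathcal N(\mathbb R^n)$, which is why the conclusion there is stated with $\vect w|_{t=0}-\vect w_0\in(\mathcal N(\mathbb R^n))^{n+2}$ rather than with exact attainment of the data. I expect the main obstacle to be the organisation of part A): coupling the \emph{local} energy estimates to the uniform-in-$\varepsilon$ finite-propagation-speed argument so that everything closes over an arbitrary compact $K\subset\mathbb R^n$, and carrying this out simultaneously with the induction over derivative orders while keeping all commutator terms moderate. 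Parts B) and C) are technically lighter, since there the norms involved are already global.
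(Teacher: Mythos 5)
Your proposal matches the paper's treatment: the paper gives no proof of this theorem but simply quotes \cite[Theorems 3.1, 3.2, and 3.4]{HoSp12}, prefaced by exactly the strategy you sketch ($\varepsilon$-wise classical solutions, Gronwall-type energy estimates made moderate by the log-type hypotheses, and uniqueness as stability under negligible perturbations of the data). Your additional details --- the role of the symmetry of $\tens A_{i,\varepsilon}$ in the integration by parts, condition (ii) controlling the propagation speed for the local case A), and the explanation of why C) only yields $\vect w|_{t=0}-\vect w_0\in(\mathcal N(\mathbb R^n))^{n+2}$ --- are consistent with the cited results and the paper's accompanying remark.
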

\medskip
\begin{remark} 
	When considering case C, the situation occurs that the initial data $\vect w_0$ is an element of the algebra $(\mathcal G_{L^\infty}({\mathbb R}^n))^{n+2}$, whereas the restriction of the solution $\vect w$ to the initial surface, i.\,e. $\vect w|_{t=0}$ is in $(\mathcal G({\mathbb R}^n))^{n+2}$. This issue can be resolved in the following way: Every representative $(f_\varepsilon)_\varepsilon$ of a generalized function $f\in\mathcal G_{L^\infty}({\mathbb R}^n)$ is also moderate in the sense of $\mathcal G({\mathbb R}^n)$; thus, $\mathcal G_{L^\infty}({\mathbb R}^n)$ can be interpreted as a subset of $\mathcal G({\mathbb R}^n)$ if we allow the difference of two representatives of $f$ to be in the ideal $\mathcal N({\mathbb R}^n)$ instead of $\mathcal N_{L^\infty}({\mathbb R}^n)$. So, we obviously have that $\vect w|_{t=0}-\vect w_0\in(\mathcal N({\mathbb R}^n))^{n+2}$ but not necessarily in $(\mathcal N_{L^\infty}({\mathbb R}^n))^{n+2}$. In other words, we consider the initial data to be in the algebra $(\mathcal G({\mathbb R}^n))^{n+2}$ but additionally satisfying the moderateness estimates of $(\mathcal G_{L^\infty}({\mathbb R}^n))^{n+2}$.
\end{remark}
Finally, we are able to formulate an existence and uniqueness theorem for wave equations based on Theorems \ref{equi_thm} and \ref{Th:hyperbolic system}.\medskip

\begin{theorem}\label{Th:Wave from system}
Consider the Cauchy problem
\begin{equation}\label{wave equation in theorem}
	-\partial_t^2 u+2\sum_{i=1}^n g_i\partial_{x_i}\partial_t u+\sum_{i,j=1}^n R_{ij}\partial_{x_i}\partial_{x_j} u + a\partial_t u+\sum_{i=1}^n b_i\partial_{x_i} u +cu =f
\end{equation}
and		
\[
	(u,\partial_tu)|_{t=0}= (u_0,u_1)
\]
with coefficients $R_{ij}, g_i, a,b_i,c$ in $\mathcal G_{L^{\infty}}(U_T)$ and $\tens R$ positive definite. Let, furthermore, $\tens S=\tens R^{\frac 12}$, where we take the square root via diagonalization of $\tens R$. Then we have the following three results. 
\begin{enumerate}[A)] 
	\item The Cauchy problem \eqref{wave equation in theorem} with initial data $u_0,u_1\in\mathcal G(\mathbb R^n)$ and right-hand side \linebreak $f\in\mathcal G(U_T) $ has a unique solution $u\in\mathcal G(U_T)$ if
	 \begin{enumerate}[(i)]
		\item the lower order coefficients $a$, $c$, $\vect b$, as well as $\tens S$, the derivative $\rm d\tens S$, the inverse $\tens S^{-1}$ and $\tens g'$ are  of local $L^\infty$-log-type,
		\item there exists $R_{\tens S,\vect g}>0$ such that we have $\sup_{(t,x)}|\vect g_\varepsilon(t,x)|=O(1)$ and \linebreak $\sup_{(t,x)}|\tens S_\varepsilon(t,x)|=O(1)$ on $(0,T)\,\times\{x\in\mathbb R^n| |x|>R_{\tens S,\vect g}\}$ as $\varepsilon\to 0$.
	\end{enumerate}
	\vspace{0.15cm}
	\item The Cauchy problem \eqref{wave equation in theorem} with initial data $u_0,u_1\in\mathcal G_{L^2}(\mathbb R^n)$ and right-hand side $f\in\mathcal G_{L^2}(U_T) $ has a unique solution $u\in\mathcal G_{L^2}(U_T)$ if the lower order coefficients $a$, $c$, $\vect b$, as well as $\tens S$, the derivative $\rm d\tens S$, the inverse $\tens S^{-1}$ and $\tens g'$ are of $L^{1,\infty}$-log-type.
		\vspace{0.15cm}
	\item Let initial data $u_0,u_1\in\mathcal G_{L^\infty}(\mathbb R^n)$ and right-hand side $f\in\mathcal G_{L^\infty}(U_T)$ be given. If the lower order coefficients $a$, $c$, $\vect b$, as well as $\tens S$, the derivative $\rm d\tens S$, the inverse $\tens S^{-1}$ and $\tens g'$ are of $L^\infty$-log-type, then there exists a unique solution $u\in\mathcal G(U_T)$ of the wave equation \eqref{wave equation in theorem} such that $(u,\partial_t u)|_{t=0}-(u_0,u_1)\in(\mathcal N({\mathbb R}^n))^{2}$.
\end{enumerate}
\end{theorem}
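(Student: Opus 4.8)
The plan is to deduce everything from Theorems~\ref{equi_thm} and~\ref{Th:hyperbolic system}, so that no genuinely new PDE analysis is required and the proof reduces to a translation of asymptotic hypotheses. Since Lemmas~\ref{wave to system} and~\ref{system to wave}, and hence Theorem~\ref{equi_thm}, rest on purely algebraic manipulations (and one integration in time starting from $t=0$) that are valid verbatim over $U_T=(0,T)\times\mathbb R^n$, the pair $(u,\vect w)$ with $\vect w=(u,\partial_t u,\tens S\vect u')^T$ satisfies: $u$ is the unique solution of \eqref{wave equation in theorem} with datum $(u_0,u_1)$ in $\mathcal G(U_T)$ (resp.\ in $\mathcal G_{L^2}(U_T)$; resp.\ with $(u,\partial_t u)|_{t=0}-(u_0,u_1)\in(\mathcal N(\mathbb R^n))^2$) if and only if $\vect w$ is the unique solution of the associated system \eqref{system}--\eqref{coefficents of system} with datum $(u_0,u_1,\tens S\vect u_0')^T$ in the matching space. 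Thus it suffices to verify, in each of the three cases separately, that the hypotheses imposed here on $a,c,\vect b,\vect g$ and $\tens R$ (equivalently $\tens S=\tens R^{1/2}$) force the hypotheses that Theorem~\ref{Th:hyperbolic system} places on $\tens A_i$, $\tens B$ and $\vect F$ from \eqref{coefficents of system}.

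The structural bookkeeping is routine. Each $\tens A_i$ is symmetric by inspection of \eqref{coefficents of system}; its only non-zero entries are $2g_i$ and components of $\tens S$, and since $\tens S=\tens R^{1/2}$ is built $\varepsilon$-wise from the positive definite representative of $\tens R$ (cf.\ Lemma~\ref{Smooth square root}) the matrices $\tens A_i$, $\tens B$ lie in $M_{n+2}(\mathcal G_{L^\infty}(U_T))$: their entries are assembled from the coefficients, from $\tens S$, $\mathrm d\tens S$, $\tens S^{-1}$ and products thereof, which the hypotheses control within the differential algebra $\mathcal G_{L^\infty}(U_T)$, while $\vect F$ lies in the required space since its only non-zero entry is $f$. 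The spatial support condition in Theorem~\ref{Th:hyperbolic system}~A)(ii) is exactly hypothesis~A)(ii) here, as the entries of $\tens A_i$ are $2g_i$ and components of $\tens S$. Finally the entries of $\tens A_i'$ are, up to constant factors, components of $\tens g'$ and of $\mathrm d\tens S$, so $\tens A_i'$ is of local $L^\infty$-log-type (resp.\ $L^{1,\infty}$-, resp.\ $L^\infty$-log-type) directly by hypothesis~(i).

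What remains is the symmetric part $\tfrac12(\tens B+\tens B^T)$. Block by block: the $(1,2)$ and $(2,1)$ entries symmetrise to $\tfrac12(1+c)$; the $(1,3)$ and $(3,1)$ blocks are zero; the $(2,3)$ and $(3,2)$ blocks give $\tfrac12\tilde{\vect b}$; and, using that $\tens S$ (hence $\tens S^{-1}$ and $\partial_t\tens S$) is symmetric together with $\partial_t\tens R=(\partial_t\tens S)\tens S+\tens S(\partial_t\tens S)$, the $(3,3)$ block becomes $-\tfrac12\bigl((\partial_t\tens S)\tens S^{-1}+\tens S^{-1}(\partial_t\tens S)\bigr)=-\tfrac12\,\tens S^{-1}(\partial_t\tens R)\,\tens S^{-1}$. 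Hence one must show that $c$, the components of $\tilde{\vect b}^T=(\Div\tens S)^T+\vect b^T\tens S^{-1}-(\Div\tens S^2)^T\tens S^{-1}$ (with $\Div\tens S^2$ a bilinear expression in the entries of $\tens S$ and $\mathrm d\tens S$), and the entries of $\tens S^{-1}(\partial_t\tens R)\tens S^{-1}$ all lie in the appropriate log-type class, given that $a,c,\vect b,\tens S,\mathrm d\tens S,\tens S^{-1}$ do.

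Here lies the only genuine difficulty, and it explains the shape of the hypotheses: log-type classes are \emph{not} closed under products — a product of two nets that are $O(\log\tfrac1\varepsilon)$ is a priori only $O((\log\tfrac1\varepsilon)^2)$, which would wreck the Gronwall estimate underlying Theorem~\ref{Th:hyperbolic system}. The argument must therefore exploit the precise algebraic form of $\tilde{\vect b}$ and of $\tens S^{-1}(\partial_t\tens R)\tens S^{-1}$ so that in each product occurring there at most one factor genuinely carries a logarithm while the others are $O(1)$ — which is exactly why hypothesis~(ii) additionally demands uniform boundedness of $\tens S$ and $\vect g$ outside a large ball, and why $\tens S$, $\mathrm d\tens S$, $\tens S^{-1}$ are all listed. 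Concretely, one should fix once and for all a single representative $(\tens S_\varepsilon)_\varepsilon$ of $\tens S$ that is simultaneously symmetric positive definite, uniformly bounded where required, and such that $(\mathrm d\tens S_\varepsilon)_\varepsilon$ and $(\tens S_\varepsilon^{-1})_\varepsilon$ realise the assumed log-type bounds, and then carry out the product estimates on that representative; checking that these requirements on one representative are mutually compatible is the delicate point. Once it is settled, substitution into parts A), B), C) of Theorem~\ref{Th:hyperbolic system} — with $L^\infty$-, $L^{1,\infty}$-, and $L^\infty$-log-type respectively, and the remark preceding the statement absorbing the discrepancy between $\mathcal G_{L^\infty}(\mathbb R^n)$ and $\mathcal G(\mathbb R^n)$ for the initial data in case C) — concludes the proof.
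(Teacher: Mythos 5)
Your overall route is exactly the paper's: rewrite \eqref{wave equation in theorem} as the system \eqref{system}--\eqref{coefficents of system}, check the hypotheses of Theorem \ref{Th:hyperbolic system} A)--C), solve the Cauchy problem for the system, and recover $u$ as the first component via Theorem \ref{equi_thm}; your structural bookkeeping (symmetry of the $\tens A_i$, membership of $\tens A_i,\tens B$ in $M_{n+2}(\mathcal G_{L^\infty}(U_T))$, $\vect F=(0,f,0)^T$, the data $\vect w_0=(u_0,u_1,\tens S\vect u_0')^T$ lying in the right space in each case, and the remark absorbing the $\mathcal G_{L^\infty}$ versus $\mathcal G$ issue in case C) coincides with the paper's proof, and your block computation of $\tfrac12(\tens B+\tens B^T)$ is correct.

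The divergence is in how the log-type condition on $\tfrac12(\tens B+\tens B^T)$ is obtained. The paper simply states that it follows ``from condition (i) and the structure of \eqref{coefficents of system}''; you instead single out the product terms $\vect b^T\tens S^{-1}$, $(\Div\tens S^2)^T\tens S^{-1}$ and $(\partial_t\tens S)\tens S^{-1}$ as the one genuine difficulty and then leave it unsettled (``once it is settled \dots''), so as written your argument is incomplete at precisely the step you declare essential. Moreover, the mechanism you propose for closing it is not available: hypothesis (ii) yields $O(1)$ bounds on $\tens S_\varepsilon$ and $\vect g_\varepsilon$ only on $(0,T)\times\{|x|>R_{\tens S,\vect g}\}$ --- its sole purpose is to match Theorem \ref{Th:hyperbolic system} A)(ii), as you yourself note earlier --- so it cannot convert a $\log\times\log$ bound into a $\log$ bound on compact sets inside that ball, and in cases B) and C) no such boundedness hypothesis is present at all. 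That said, the observation behind your worry is legitimate and is not addressed by the paper either: log-type classes are not stable under multiplication, so from (i) alone these entries are a priori only $O\bigl((\log\tfrac1\varepsilon)^2\bigr)$; the paper's proof passes over this silently (in its application to Geroch--Traschen metrics the issue is harmless, since there $\tens S$ and $\tens S^{-1}$ are locally uniformly bounded and only $\mathrm d\tens S$ carries the logarithm). So judged against the paper, your proposal reconstructs the intended argument faithfully, but it neither completes the product estimate it flags nor correctly identifies what extra information (e.g.\ uniform local bounds on $\tens S$, $\tens S^{-1}$, $\vect b$ for a suitable representative) would be needed to justify it.
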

\begin{proof}
	We start with the proof for case A. We rewrite the wave equation into the corresponding symmetric hyperbolic system with $\tens A_i$, $\tens B$ and $\vect F$ as in \eqref{coefficents of system}.  Clearly the coefficients of the hyperbolic system are in $\mathcal G_{L^\infty}({\mathbb R})$ since the coefficients of the wave equation are. From condition (i) and the structure of \eqref{coefficents of system} we obtain that $\tens A'_i$ and $\frac 1 2(\tens B+\tens B^T)$---the symmetric part of $\tens B$---are locally of $L^\infty$-log-type. Since by condition (ii) $u_0$ and $u_1$ are in $\mathcal G({\mathbb R}^n)$, and $\tens S$ has entries in $\mathcal G_{L^\infty}(U_T)$, the initial data for the system $\vect w_0=(u_0,u_1,\tens Su'_0)^T$ is in $(\mathcal G({\mathbb R}^n))^{n+2}$. Furthermore, $f\in\mathcal G(U_T)$, thus $F=(0,f,0)\in (\mathcal G(U_T))^{n+2}$. The matrix $\tens S$ and the vector $\vect g$ satisfy condition (ii). Thus, there exists a constant $R_{\tens S,\vect g}>0$ such that $\tens A$, which depends only on $\tens S$ and $\vect g$, is $O(1)$ on $(0,T)\,\times\{x\in\mathbb R^n| |x|>R_{\tens S,\vect g}\}$. Summing up, all conditions of Theorem \ref{Th:hyperbolic system}, case A are satisfied, and we can apply the theorem to obtain a solution $\vect w$ to the initial value problem of the hyperbolic system.  Theorem \ref{equi_thm} guarantees that the first component $u$ of $\vect w$ is the unique solution to the Cauchy problem for the wave equation.
	
	 The proofs for cases B [resp. C] follow the same pattern. Again, we rewrite the wave equation into its corresponding symmetric hyperbolic first-order system and obtain matrices $\tens A_i$ and $\tens B$ in $M_{n+2}(\mathcal G_{L^\infty}(U_T))$. By condition (i) we have that $\tens A'_i$ and the symmetric part of $\tens B$ are $L^{1,\infty}$-log-type [resp. $L^\infty$-log-type]. Since by condition (ii) the initial data $u_0$ and $u_1$ are in $\mathcal G_{L^2}({\mathbb R}^n)$ [resp. $\mathcal G_{L^\infty}({\mathbb R}^n)$], and $\tens S$ has entries in $\mathcal G_{L^\infty}(U_T)$, the initial data for the system $\vect w_0=(u_0,u_1,\tens Su'_0)^T$ is in $(\mathcal G_{L^2}({\mathbb R}^n))^{n+2}$  [resp. $(\mathcal G_{L^\infty}({\mathbb R}^n))^{n+2}$]. We also have  $f\in\mathcal G_{L^2}(U_T)$ [resp.  $\mathcal G_{L^\infty}(U_T)$], thus $\vect F=(0,f,0)\in(\mathcal G_{L^2}(U_T))^{n+2} $ [resp. $ (\mathcal G_{L^\infty}(U_T))^{n+2}$]. Altogether we can apply Theorem \ref{Th:hyperbolic system}, case B [resp. C] and obtain a solution $\vect w$ to the initial value problem for the hyperbolic system. Finally, Theorem \ref{equi_thm} guarantees its first component $u$ is the unique solution to the Cauchy problem for the wave equation, and we are done.
\end{proof}

The asymptotic estimates on the coefficients required in Theorem \ref{Th:Wave from system} are less restrictive than those supposed in the (local) existence results for the initial value problem for the wave equation on ``weakly singular'' Lorentzian manifolds, cf.\ \cite[Theorem 3.1]{GMS09} and
\cite[Theorem 3.1]{Han11}. In particular, the conditions of Theorem \ref{Th:Wave from system} A) are general enough to cover the Laplace-Beltrami operator of metrics in the Geroch-Traschen class, which is was not possible previously. The relevance of this class, introduced in \cite{GeTr87}, comes from the fact that it is viewed as the ``maximal reasonable'' class of metrics that allows for the definition of the Riemann curvature tensor as a distribution (see also \cite{LeMa07}). We finish this paper by deriving an existence result for the wave operator of such metrics.

A Lorentzian metric $\tens{g}_0$ on a smooth manifold $M$ belongs to the Geroch-Traschen class if $\tens{g}_0$ and its inverse $\tens{g}_0^{-1}$ belong to $\GT(M)$ and, furthermore, if $|\det\tens{g}_0|\geq C>0$ almost everywhere on compact sets. Since we are only interested in a local existence result we may work in a fixed chart and moreover cut off the metric $\tens{g}_0$ outside some ball such that it is constant there. We then regularize $\tens{g}_0$ via componentwise convolution with a mollifier to obtain a generalized metric. More precisely, denoting the components of $\tens{g}_0$ by $g_{0,ij}$ we will write $g^\varepsilon_{ij}$ for their smoothings, i.\,e. $g^\varepsilon_{ij}=g_{0,ij}*\psi_{\varepsilon}$, with $(\psi_\varepsilon)_\varepsilon$ being a model delta net (i.\,e. $\psi_\varepsilon(x)=\varepsilon^{-(n+1)}\rho(x/\varepsilon)$ for some fixed test function $\rho$ with unit integral). For a more sophisticated way of smoothing metrics of the Geroch-Traschen class see \cite{StVi09}. We denote by $\tens{g}=[(\tens{g}_\varepsilon)_\varepsilon]$ the resulting generalized Lorentzian metric on $\mathbb{R}^{n+1}$. It is then clear that in general $\mathrm d\tens{g}_\varepsilon\not=O(1)$ (otherwise $\tens{g}\in\Wl^{1,\infty}$), hence condition (B) of \cite[Theorem 3.1]{GMS09} is violated as well as condition (i) of \cite[Theorem 3.1]{Han11}.  However, logarithmically rescaling the mollifier (i.\,e. setting $\psi_\varepsilon(x)=\gamma_\varepsilon^{n+1}\rho(\gamma_\varepsilon x)$ with $\gamma_\varepsilon=\log\frac{1}{\varepsilon}$) allows to apply Theorem \ref{Th:Wave from system} A).\medskip 

\begin{corollary}[The wave equation for Geroch-Traschen metrics]
 Let $\tens{g}$ be a generalized metric on $\mathbb{R}^{n+1}$ obtained as the smoothing of a metric of Geroch-Traschen class as above with a logarithmically rescaled mollifier. Then the Cauchy problem for the wave equation
  \[
   \Box_{\tens{g}} u=0\qquad (u,\partial_t u)\mid_{t=0}=(u_0,u_1) \in\mathcal G(\mathbb R^n)
  \]
  has a unique solution in $u\in\mathcal G(U_T)$ for arbitrary $T$.
\end{corollary}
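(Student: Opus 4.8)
The plan is to derive the Corollary from Theorem~\ref{Th:Wave from system}~A) by exhibiting $\Box_{\tens{g}}u=0$ as an instance of \eqref{wave equation in theorem}. On the fixed chart (chosen, if necessary, adapted to a spacelike slicing, so that $-g^{00}$ and the spatial block of $\tens{g}^{-1}$ are bounded below by positive constants on compact sets --- which is consistent with the Minkowskian cut-off outside the ball), expand the Laplace--Beltrami operator on a representative as
\[
	\Box_{\tens{g}_\varepsilon}u=g_\varepsilon^{\mu\nu}\,\partial_\mu\partial_\nu u+\Gamma^\nu_\varepsilon\,\partial_\nu u,\qquad \Gamma^\nu_\varepsilon:=\tfrac{1}{\sqrt{|\det\tens{g}_\varepsilon|}}\,\partial_\mu\!\bigl(\sqrt{|\det\tens{g}_\varepsilon|}\,g_\varepsilon^{\mu\nu}\bigr),
\]
and divide by $-g^{00}_\varepsilon$. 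This is legitimate and identifies the coefficients of \eqref{wave_eq} as $g_i=g^{0i}/(-g^{00})$, $R_{ij}=g^{ij}/(-g^{00})$, $a=\Gamma^0/(-g^{00})$, $b_i=\Gamma^i/(-g^{00})$, $c=0$, $f=0$; here $\tens{R}=(R_{ij})$ is a generalized Riemannian metric because $\{t=0\}$ is spacelike for $\tens{g}$, so $\tens{S}=\tens{R}^{1/2}$ and $\tens{S}^{-1}$ are exactly the symmetric positive definite generalized matrices of Section~\ref{Transformation}, and the problem (with data $u_0,u_1\in\mathcal{G}(\mathbb{R}^n)$ and vanishing right-hand side) has the form required by Theorem~\ref{Th:Wave from system}.

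It then remains to check conditions (i) and (ii) of that theorem. Condition (ii) is immediate: since $\gamma_\varepsilon=\log\tfrac1\varepsilon\to\infty$ the supports of the $\psi_\varepsilon$ shrink, so for small $\varepsilon$ each $\tens{g}_\varepsilon$ equals the constant (Minkowskian) cut-off value outside a fixed ball, whence $\vect{g}_\varepsilon$ and $\tens{S}_\varepsilon$ are constant, in particular $O(1)$, there. For condition (i) the decisive estimate is
\[
	\bigl\|\partial_k(g_{0,ij}*\psi_\varepsilon)\bigr\|_{L^\infty}=\bigl\|g_{0,ij}*\partial_k\psi_\varepsilon\bigr\|_{L^\infty}\le\|g_{0,ij}\|_{L^\infty}\,\|\partial_k\psi_\varepsilon\|_{L^1}=\gamma_\varepsilon\,\|g_{0,ij}\|_{L^\infty}\,\|\partial_k\rho\|_{L^1}=O\bigl(\log\tfrac1\varepsilon\bigr),
\]
combined with $\|g_{0,ij}*\psi_\varepsilon\|_{L^\infty}\le\|g_{0,ij}\|_{L^\infty}\|\rho\|_{L^1}=O(1)$ --- and it is precisely here that the logarithmic rescaling is essential, the unrescaled mollifier yielding the useless bound $O(\tfrac1\varepsilon)$. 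Granting that $|\det\tens{g}_\varepsilon|$ stays bounded below by a fixed positive constant on compact sets, these two bounds propagate through the algebraic operations: the entries $g_\varepsilon^{\mu\nu}$ of $\tens{g}_\varepsilon^{-1}$, being rational in the $g_{0,ij}*\psi_\varepsilon$ with denominator $\det\tens{g}_\varepsilon$, are $O(1)$ with first derivatives $O(\log\tfrac1\varepsilon)$, hence so are $R_{ij},g_i,a,b_i$ and the matrix $\tens{g}'$; and since the eigenvalues of $\tens{R}_\varepsilon$ then lie in a fixed compact subset of $(0,\infty)$ on compact sets, Lemma~\ref{Smooth square root} --- applied to $f^{-1}$, which together with its derivative is bounded there --- gives the same for $\tens{S}$, $\mathrm{d}\tens{S}$ and $\tens{S}^{-1}$. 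Thus every quantity appearing in (i) is of local $L^\infty$-log-type (indeed of $L^\infty$-log-type, the cut-off making everything globally bounded), and Theorem~\ref{Th:Wave from system}~A) applies on $U_T$ for each $T>0$, delivering the asserted unique $u\in\mathcal{G}(U_T)$.

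The expansion of $\Box_{\tens{g}}$ and the propagation of the log-type bounds through the rational operations and the square root are routine. The one genuinely delicate ingredient is the proviso used above: that the componentwise-smoothed matrices $\tens{g}_\varepsilon$ remain Lorentzian with $|\det\tens{g}_\varepsilon|$ bounded below by a fixed positive constant on compact sets --- this is what keeps $\tens{g}_\varepsilon^{-1}$, and hence $\tens{R}$, $\tens{S}$, $\tens{S}^{-1}$, uniformly bounded rather than merely moderate, so that the log-type estimates survive division. It is here that the defining Geroch--Traschen condition $|\det\tens{g}_0|\ge C>0$ is indispensable; I would extract the required uniform bound from the a.e.\ bounds on $\tens{g}_0$ and $\tens{g}_0^{-1}$ after the chart adaptation, falling back on the more sophisticated smoothing of \cite{StVi09} --- for which all the estimates above go through unchanged --- should the naive convolution turn out not to suffice.
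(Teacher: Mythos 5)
Your argument is essentially the paper's own: rewrite $\Box_{\tens g}$ as an instance of \eqref{wave equation in theorem} and verify the hypotheses of Theorem \ref{Th:Wave from system}~A), with the cut-off yielding condition (ii) and the logarithmically rescaled mollifier yielding the local $L^\infty$-log-type bounds of condition (i) via $\|\partial_k\psi_\varepsilon\|_{L^1}=O(\log\frac1\varepsilon)$. You are in fact somewhat more explicit than the paper's sketch, which treats the lower-order Laplace--Beltrami terms as vanishing and simply asserts the locally uniform bounds on $\tens S$ and $\tens S^{-1}$ that you trace back to the Geroch--Traschen determinant condition (with the same fallback on \cite{StVi09} that the paper itself cites), so the proposal matches the paper's route and level of rigour.
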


\medskip\noindent
\emph{Sketch of proof. } We check that the conditions of Theorem \ref{Th:Wave from system} A) hold. Indeed $a, \vect{b}$ and $c$ vanish as well as $f$. Furthermore $R_{ij}$ and $g_i$ are given by the components of $\tens{g}$ which belong to $\mathcal G_{L^{\infty}}(U_T)$ due to the cut off applied to the metric $\tens{g}_0$. Condition A)(ii) holds true again due to the cut off and local boundedness of $\tens{g}_0$. As for condition A)(i), $\tens{S}$ and its inverse $\tens{S}^{-1}$ are even locally uniformly bounded. Finally, due to the logarithmic rescaling of the mollifier the derivatives of $\tens{g}$    
are of local $L^\infty$-$\log$-type and we are done.\hfill$\Box$

\section*{Acknowledgements}
This work was supported by FWF-Grants Y237, P20525 and P23714 as well as the Research Grant 2011 of the University of Vienna. We furthermore would like to thank Michael Grosser and Michael Kunzinger for their valuable input.

\end{document}